\newtheorem{thm}{Theorem}
\newtheorem{lemma}[thm]{Lemma}
\numberwithin{equation}{section}
\newcommand{\Comment}[1]{}
\newcommand{\rbr}[1]{\left( {#1} \right)}
\newcommand{\cbr}[1]{\left\{ {#1} \right\}}
\newcommand{\abr}[1]{\left\langle {#1} \right\rangle}
\newcommand{\eq}[1]{(\ref{#1})}
\def\CC{\mathbb{C}}
\def\RR{\mathbb{R}}
\def\ZZ{\mathbb{Z}}
\def\NN{\mathbb{N}}
\def\supp{\text{supp}}
\begin{document}
\title{Explicit Salem Sets in $\mathbb{R}^2$}
\author{Kyle Hambrook}
\date{May 19, 2016}
\maketitle

\begin{abstract}
We construct explicit (i.e., non-random) examples of Salem sets in $\mathbb{R}^2$ of dimension $s$ for every $0 \leq s \leq 2$. In particular, we give the first explicit examples of Salem sets in $\mathbb{R}^2$ of dimension $0 < s < 1$. This extends a theorem of Kaufman.
\end{abstract}

\section{Introduction}\label{intro}

\subsection{Basic Notation} 

For $x \in \mathbb{R}^d$, $|x| = |x|_{\infty} = \max_{1 \leq i \leq d} |x_i|$ and $|x|_2 = (\sum_{i=1}^{d} |x_i|^2)^{1/2}$. 
For $x,y \in \RR^d$, $\abr{x,y} = \sum_{i=1}^{d} x_i y_i$ is the Euclidean inner product.
If $A$ is a finite set, $|A|$ is the cardinality of $A$. 
The expression $a \lesssim b$ stands for ``there is a constant $c > 0$ such that $a \leq cb$.'' 
The expression $a \gtrsim b$ is analogous. 

\subsection{Background}

If $\mu$ is a finite Borel measure on $\mathbb{R}^d$, then the Fourier transform of $\mu$ is defined by
$$
\widehat{\mu}(\xi) = \int_{\mathbb{R}^d} e^{-2 \pi i \abr{\xi,x}} d\mu(x) \quad \forall \xi \in \mathbb{R}^d.
$$

It is a classic result essentially due to Frostman \cite{Frostman} 
that the Hausdorff dimension of any Borel set $A \subseteq \mathbb{R}^d$ can be expressed as 
$$
\dim_H A = \sup\cbr{s \in [0,d] : \int_{\mathbb{R}^d} |\widehat{\mu}(\xi)|^2|\xi|^{s-d} d\xi  < \infty \text{ for some } \mu \in \mathcal{P}(A)}, 
$$
where $\mathcal{P}(A)$ denotes the set of all Borel probability measures with compact support contained in $A$.

The Fourier dimension of a set $A \subseteq \mathbb{R}^d$ is defined to be
$$
\dim_F A = \sup\cbr{s \in [0,d] : \sup_{0 \neq \xi \in \mathbb{R}^d}|\widehat{\mu}(\xi)|^2 |\xi|^{s} < \infty \text{ for some } \mu \in \mathcal{P}(A)}.
$$

As general references for Hausdorff and Fourier dimension, see \cite{Falconer}, \cite{mattila-book}, \cite{mattila-book-2}, \cite{Wolff}. Recent papers by Ekstr\"{o}m, Persson, and Schmeling \cite{ek-1} and Fraser, Orponen, and Sahlsten \cite{FOS} have revealed some interesting subtleties about Fourier dimension. 

Plainly, for every Borel set $A \subseteq \mathbb{R}^d$, 
$$
\dim_F A \leq \dim_H A.
$$

Every $k$-dimensional plane in $\mathbb{R}^d$ with $k < d$ has Fourier dimension $0$ and Hausdorff dimension $k$. 
The middle-thirds Cantor set in $\mathbb{R}$ has Fourier dimension $0$ and Hausdorff dimension $\ln 2 / \ln 3$. 
K{\"o}rner \cite{Korner} has shown that for every $0 \leq s \leq t \leq 1$ there is a compact set $A \subseteq \mathbb{R}$ with Fourier dimension $s$ and Hausdorff dimension $t$.

Sets $A \subseteq \mathbb{R}^d$ with $$\dim_F A = \dim_H A$$ are called Salem sets. 

Every ball in $\mathbb{R}^d$ is a Salem set of dimension $d$. Every countable set in $\mathbb{R}^d$ is a Salem set of dimension zero. Less trivially, every sphere in $\mathbb{R}^d$ is a Salem set of dimension $d-1$. 
%
Salem sets in $\mathbb{R}^d$ of dimension 
$s \neq 0,d-1,d$   
are more exotic.

There are many random constructions of Salem sets. 
Using Cantor sets with randomly chosen contraction ratios, Salem \cite{Salem} was the first to show that for every $s \in (0,1)$ there is a Salem set in $\mathbb{R}$ of dimension $s$. 
Kahane showed that images of compact subsets of $\mathbb{R}^d$ under certain stochastic processes (namely, Brownian motion, fractional Brownian motion, and Gaussian Fourier series) are almost surely Salem sets (see {\cite{Kahane-1966-Brownian}, \cite{Kahane-1966-Fourier}, \cite[Ch.17,18]{Kahane}). 
Through these results, 
Kahane established that for every $s \in (0,d)$ there is a Salem set in $\mathbb{R}^d$ of dimension $s$.  
Ekstr\"{o}m \cite{ek-2} has shown that the image of any Borel set in $\mathbb{R}$ under a random diffeomorphism is almost surely a Salem set. 
Other random constructions of Salem sets have been given by Bluhm \cite{Bluhm-1}, {\L}aba and Pramanik \cite{LP}, Shmerkin and Suomala \cite{shmerkin-suomala}, and Chen and Seeger \cite{chen-seeger2015}.

These random constructions give collections of sets where each individual set is ``almost surely'' or ``with positive probability" a Salem set. But they don't provide any explicit examples of Salem sets. 

Explicit Salem sets are much more rare. Kaufman \cite{Kaufman} gave the first explicit examples of Salem sets in $\mathbb{R}$ of arbitrary dimension $s \in (0,1)$. Kaufman showed that set of $\tau$-well-approximable numbers 
$$
E(\tau) = \cbr{x \in \mathbb{R} : |qx-r| \leq |q|^{-\tau} \text{ for infinitely many } (q,r) \in \ZZ^2}
$$
is a Salem set of dimension $2/(1+\tau)$ when $\tau > 1$. The Hausdorff dimension of $E(\tau)$ was known to be $2/(1+\tau)$ by the classic theorem of Jarn{\'\i}k \cite{Jarnik} and Besicovitch \cite{Bes}. Kaufman showed that the Fourier dimension of $E(\tau)$ is also $2/(1+\tau)$. Note that Dirichlet's approximation theorem easily gives $E(\tau) = \mathbb{R}$ when $\tau \leq 1$.  
K{\"o}rner \cite{Korner} combined Kaufman's construction and a Baire category argument to prove that for every $0 \leq s \leq t \leq 1$ there is a compact set $A \subseteq \mathbb{R}$ with Fourier dimension $s$ and Hausdorff dimension $t$. 
Hambrook \cite{hambrook-explicit} generalized Kaufman's argument to show that many sets in $\mathbb{R}$ closely related to $E(\tau)$ are also Salem sets. 

Bluhm \cite{Bluhm-2} gave a detailed account of what is essentially Kaufman's proof and also pointed out that (as a consequence of a theorem of Gatesoupe \cite{Gatesoupe}) the radial set $\cbr{x \in \mathbb{R}^d : |x|_2 \in E(\tau)}$ is 
a Salem set 
in $\mathbb{R}^d$ of dimension $d-1 + 2/(1+\tau)$ whenever $\tau > 1$. However, explicit Salem sets in $\mathbb{R}^d$ of dimension $0 < s < d-1$ were unknown until now.


From the point of view of Diophantine approximation, the natural mutli-dimensional generalization of $E(\tau)$ is 
$$
E(m,n,\tau) = \cbr{x \in \mathbb{R}^{mn} : |xq - r| \leq |q|^{-\tau} \text{ for infinitely many } (q,r) \in \ZZ^n \times \ZZ^m},
$$
where we identify $\mathbb{R}^{mn}$ with the set of $m \times n$ matrices with real entries. 
By Minkowski's theorem on linear forms, $E(m,n,\tau) = \mathbb{R}^{mn}$ when $\tau \leq n/m$.  
Bovey and Dodson \cite{BD} showed the Hausdorff dimension of $E(m,n,\tau)$ is $m(n-1) + (m+n)/(1+\tau)$ if $\tau > n/m$. The $n=1$ case was done earlier by Jarn{\'\i}k \cite{Jarnik} and Eggleston \cite{Eggleston}. The mass transference principle and slicing technique of Beresnevich and Velani \cite{BV}, \cite{BV-2} may also be used to compute the Hausdorff dimension of $E(m,n,\tau)$. 

Hambrook \cite{hambrook-explicit} proved the Fourier dimension of $E(m,n,\tau)$ is at least $2n/(1+\tau)$ if $\tau > n/m$. However, it is unclear whether $E(m,n,\tau)$ is a Salem set when $\tau > n/m$ and $mn > 1$.

\subsection{Statement of Results}

In the present paper, we extend Kaufman's method \cite{Kaufman} and give explicit examples of Salem sets in $\mathbb{R}^2$ of every dimension $s \in [0,2]$. In particular, we give the first explicit examples of Salem sets in $\mathbb{R}^2$ of dimension $0 < s < 1$.

The key idea is to identify $\RR^2$ with $\CC$. 
Then $\RR^2$ is a field (so we can multiply and divide elements of $\RR^2$), and $\ZZ^2$ is identified with the ring of Gaussian integers $\ZZ+i\ZZ$. 
This allows us to basically follow Kaufman's argument. 

As a reference for the Gaussian integers, see for example \cite{HW}. It will be important that the divisor bound for the Gaussian integers has the same shape as the divisor bound for the integers.

Let $\tau \in \mathbb{R}$. Define 
$$
E_{\ast}(\tau) = \cbr{x \in \mathbb{R}^2 : |qx - r| \leq |q|^{-\tau} \text{ for infinitely many } (q,r) \in \ZZ^2 \times \ZZ^2}.
$$
We identify $\RR^2$ and $\CC$, so $qx$ is a product of complex numbers. 
Note $\abr{\xi, x}$, which appears in the definition of the Fourier transform, is still $\abr{\xi, x} = \xi_1 x_1 + \xi_2 x_2$ for $\xi,x \in \RR^2$.

\begin{thm}\label{main-thm}
For every closed ball $B \subseteq \mathbb{R}^2$, there exists a Borel probability measure with support contained in $E_{\ast}(\tau) \cap B$ such that
$$
|\widehat{\mu}(\xi)| \lesssim |\xi|^{-2/(1+\tau)} \exp(\ln |\xi| / \ln \ln |\xi|) \quad \forall \xi \in \mathbb{R}^2, |\xi| > e.
$$
\end{thm}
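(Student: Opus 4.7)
The plan is to adapt Kaufman's construction to $\RR^2$ via the identification $\RR^2 \cong \CC$, with the Gaussian integers $\ZZ[i]$ playing the role of $\ZZ$. Fix a smooth nonnegative bump $\eta$ on $\RR^2$ with $\int \eta = 1$ supported in a small disk, and for each large integer $M$ define
\[
\mu_M(x) = C_M \sum_{\substack{q \in \ZZ[i] \\ M \leq |q| < 2M}} \sum_{\substack{r \in \ZZ[i] \\ r/q \in B}} M^{2\tau}\,\eta\bigl(M^\tau(qx - r)\bigr),
\]
where $C_M$ normalizes the total mass to $1$ and $qx$, $r/q$ are complex products and quotients. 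Each bump is supported where $|qx-r| \lesssim M^{-\tau} \lesssim |q|^{-\tau}$, so choosing a rapidly growing sequence $M_k \to \infty$, the family $\mu_{M_k}$ is tight and any weak accumulation point $\mu$ has $\supp(\mu) \subseteq E_{\ast}(\tau) \cap \overline{B}$ by a standard Borel--Cantelli / Portmanteau argument.

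Next I would compute $\widehat{\mu_M}(\xi)$. The change of variables $y = qx - r$ (with Jacobian $|q|^{-2}$ for complex multiplication) together with the duality identity $\abr{\xi,\,a/q} = \abr{\xi/\bar q,\,a}$---which holds in $\CC$ because $\abr{z,w} = \Re(z\bar w)$---yields
\[
\widehat{\mu_M}(\xi) = C_M \sum_{M \leq |q| < 2M} \frac{\hat\eta\bigl(\xi/(M^\tau \bar q)\bigr)}{|q|^2} \sum_{\substack{r \in \ZZ[i]\\ r/q \in B}} e^{-2\pi i \abr{\xi/\bar q,\,r}}.
\]
Replacing $\mathbf{1}_{r/q \in B}$ by a smooth cutoff $\chi_B$ (the boundary error being easily absorbed) and applying Poisson summation over $\ZZ[i]$ together with the dual identity $\abr{\alpha,\,qs} = \abr{\bar q\alpha,\,s}$ converts the inner sum to $|q|^2 \sum_{n \in \ZZ[i]} \hat{\chi_B}(\xi + \bar q n)$, so
\[
|\widehat{\mu_M}(\xi)| \lesssim C_M \sum_{M \leq |q| < 2M} \bigl|\hat\eta(\xi/(M^\tau \bar q))\bigr| \sum_{n \in \ZZ[i]} \bigl|\hat{\chi_B}(\xi + \bar q n)\bigr|.
\]

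Since $\hat\eta$ is Schwartz, only scales with $M \gtrsim |\xi|^{1/(1+\tau)}$ contribute non-negligibly, and on that range $C_M \sim M^{-2} \lesssim |\xi|^{-2/(1+\tau)}$. Since $\hat{\chi_B}$ is Schwartz, pairs $(q,n)$ contribute significantly only when $\bar q n$ lies within $O(1)$ of $-\xi$, so the relevant pairs are factorizations $\bar q n = m$ in $\ZZ[i]$ of Gaussian integers $m$ near $-\xi$, with $|q|$ in the prescribed dyadic range. Their number is bounded by the Gaussian-integer divisor count $d_{\ZZ[i]}(m) \lesssim \exp(c\log|m|/\log\log|m|)$, which---as remarked in the introduction---has exactly the shape of the classical bound because $\ZZ[i]$ is a UFD with the familiar density of norms. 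After a dyadic decomposition in $|\xi + \bar q n|$ to absorb off-diagonal tails, this gives
\[
|\widehat{\mu_M}(\xi)| \lesssim |\xi|^{-2/(1+\tau)} \exp\bigl(c \log|\xi|/\log\log|\xi|\bigr)
\]
uniformly in $M$. Because $\widehat{\mu_{M_k}}(\xi) \to \widehat{\mu}(\xi)$ pointwise along the weakly convergent subsequence, this bound descends to $\mu$.

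The chief obstacle is the final divisor/character-sum estimate: controlling the terms where $|\xi + \bar q n|$ is moderate---rather than tiny---and packaging them, via the Schwartz tails of $\hat\eta$ and $\hat{\chi_B}$ combined with the dyadic decomposition, into a single factor governed by the Gaussian-integer divisor bound. The rest of the argument is in essence Kaufman's, and exploiting $\RR^2 \cong \CC$ is precisely what makes the Gaussian-integer divisor bound---of the same shape as the classical one---available.
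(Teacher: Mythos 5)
Your single-scale Fourier analysis is on target and matches the paper's Lemmas~\ref{lemma 1}--\ref{divisor Gaussian} in spirit: the identity $\abr{\xi,a/q}=\abr{\xi/\bar q,a}$ coming from $\abr{z,w}=\Re(z\bar w)$, the appearance of divisibility $\bar q\mid\ell$ in $\ZZ[i]$ after Poisson summation, and the Gaussian-integer divisor bound are exactly the right ingredients, and your computation of the mass $C_M\sim M^{-2}$ and the resulting $|\xi|^{-2/(1+\tau)}$ envelope is correct.

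However, the way you assemble the scales into a final measure is wrong, and the gap is fatal. You take a weak accumulation point of the individual measures $\mu_{M_k}$ and assert that its support lies in $E_{\ast}(\tau)$. It does not. Each $F_{M_k}$ converges weak-$*$ to the constant $1$ (all its nonzero Fourier coefficients go to $0$ as $M_k\to\infty$), so any weak accumulation point of $\mu_{M_k}$ is simply the normalized smooth density on $B$ you started with; its support is all of $\overline B$, while $E_{\ast}(\tau)$ has Lebesgue measure zero for $\tau>1$. More fundamentally, a weak limit does not inherit the supports of the approximants (compare $\delta_{1/k}\rightharpoonup\delta_0$), and membership in $E_{\ast}(\tau)$ is a tail condition requiring $x$ to be close to $r/q$ for \emph{infinitely many} scales, which no single $\mu_{M}$ enforces. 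The correct construction is an infinite \emph{product}: set $d\mu_k=\chi_0\,F_{M_1}\cdots F_{M_k}\,dx$, so that $\supp(\mu)\subseteq\supp(\chi_0)\cap\bigcap_k\supp(F_{M_k})$, and this intersection genuinely lands in $E_{\ast}(\tau)$ because each factor $F_{M_k}$ forces a good approximation at scale $M_k$ (Lemma~\ref{support lemma}). To control $\widehat\mu$ of the product you then need a recursion estimate of the form $|\widehat{\chi F_{M}}(\xi)-\widehat\chi(\xi)|\le\delta g(\xi)$ for $M$ large (the paper's Lemma~\ref{main-lemma}), which lets you choose $M_1<M_2<\cdots$ greedily and telescope. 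That recursion lemma, and the product structure it supports, is the essential idea missing from your proposal; the single-scale bound you proved is an input to it, not a substitute for it.
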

The proof of Theorem \ref{main-thm} is an extension of Hambrook's  variation \cite{hambrook-explicit} on Kaufman's argument. 

By a standard covering argument, we have
$
\dim_{H} E_{\ast}(\tau) \leq \min\cbr{4/(1+\tau), 2}.
$
Therefore Theorem \ref{main-thm} implies 
\begin{thm}
$E_{\ast}(\tau)$ is a Salem set with 
$$
\dim_{H} E_{\ast}(\tau)  = \dim_{F} E_{\ast}(\tau) = \min\cbr{ 4/(1+\tau) , 2 }.
$$
\end{thm}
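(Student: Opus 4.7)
The plan is to sandwich $\dim_F E_*(\tau)$ and $\dim_H E_*(\tau)$ at the common value $\min\{4/(1+\tau), 2\}$, using the basic inequality $\dim_F \leq \dim_H$. The lower bound on the Fourier dimension will follow directly from Theorem~\ref{main-thm}, and the matching upper bound on the Hausdorff dimension is the direct covering argument already flagged in the excerpt.

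For the Hausdorff upper bound, I would fix a closed ball $B$ and, for each integer $N$, use the limsup nature of $E_*(\tau)$ to write
\[
E_*(\tau) \cap B \ \subseteq \ \bigcup_{\substack{q \in \ZZ^2 \\ |q| \geq N}}\ \bigcup_{\substack{r \in \ZZ^2 \\ r/q \in 2B}} \overline{B}\!\left( r/q,\, |q|^{-1-\tau}\right).
\]
Grouping by dyadic shells $|q| \sim 2^k$, there are $\Theta(2^{2k})$ admissible Gaussian integers $q$ and, for each such $q$, $O(2^{2k})$ lattice points $r$ with $r/q \in 2B$; the balls have radius $\sim 2^{-k(1+\tau)}$. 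The $s$-dimensional Hausdorff premeasure of $E_*(\tau)\cap B$ is therefore dominated by $\sum_{k \geq \log_2 N} 2^{k(4-(1+\tau)s)}$, which tends to $0$ as $N \to \infty$ whenever $s > 4/(1+\tau)$. Combined with the trivial bound $\dim_H \leq 2$, this yields $\dim_H E_*(\tau) \leq \min\{4/(1+\tau), 2\}$.

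For the Fourier-dimension lower bound, I would apply Theorem~\ref{main-thm} to any closed ball $B$ to obtain a probability measure $\mu$ with support in $E_*(\tau) \cap B$, and observe that $\exp(\ln|\xi|/\ln\ln|\xi|) = |\xi|^{1/\ln\ln|\xi|} = |\xi|^{o(1)}$ as $|\xi| \to \infty$. Hence, for any $s < \min\{4/(1+\tau), 2\}$ and $|\xi|>e$,
\[
|\widehat{\mu}(\xi)|^2 |\xi|^s \ \lesssim \ |\xi|^{\, s - 4/(1+\tau) + 2/\ln\ln|\xi|},
\]
whose exponent is eventually negative ($s - 4/(1+\tau) \leq s - 2 < 0$ when $\tau \leq 1$, and $s - 4/(1+\tau) < 0$ when $\tau > 1$). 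For $0 < |\xi| \leq e$ the product is trivially bounded because $|\widehat{\mu}|\leq 1$. Thus $\sup_{\xi\neq 0}|\widehat{\mu}(\xi)|^2|\xi|^s < \infty$, so $\dim_F E_*(\tau) \geq \min\{4/(1+\tau),2\}$.

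There is essentially no obstacle here: Theorem~\ref{main-thm} carries the entire analytic content, and what remains is numerical bookkeeping. The only care required is the dyadic counting of Gaussian integers in the covering step and the observation that the sub-polynomial factor $\exp(\ln|\xi|/\ln\ln|\xi|)$ is absorbed by the supremum over admissible $s$ in the definition of Fourier dimension.
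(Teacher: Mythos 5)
Your proposal is correct and follows exactly the same route as the paper: the Hausdorff upper bound $\dim_H E_*(\tau) \leq \min\{4/(1+\tau),2\}$ via a covering argument, the Fourier lower bound from Theorem~\ref{main-thm} (after absorbing the sub-polynomial factor $\exp(\ln|\xi|/\ln\ln|\xi|)=|\xi|^{o(1)}$), and the inequality $\dim_F\leq\dim_H$ to close the sandwich. The paper dispatches the covering step as a ``standard covering argument'' without detail; you have simply filled in the dyadic-shell counting of Gaussian integers, which is accurate (up to the harmless constant in the ball radius coming from comparing $|\cdot|_\infty$ with the complex modulus).
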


The decay rate in Theorem \ref{main-thm} can actually be improved slightly by adhering more closely to Kaufman's original argument. 
Let $P$ denote the set of Gaussian primes. That is, $P$ is the set of prime elements in the Gaussian integers. Define
$$
E_{\ast}(P,\tau) = \cbr{x \in \mathbb{R}^2 : |qx - r| \leq |q|^{-\tau} \text{ for infinitely many } (q,r) \in P \times \ZZ^2}.
$$
Then $E_{\ast}(P,\tau) \subseteq E_{\ast}(\tau)$ and we have 
\begin{thm}\label{prime-thm}
For every closed ball $B \subseteq \mathbb{R}^d$, there exists a Borel probability measure with support contained in $E_{\ast}(P,\tau) \cap B \subseteq E_{\ast}(\tau) \cap B$ such that
$$
|\widehat{\mu}(\xi)| \lesssim |\xi|^{-2/(1+\tau)} \ln|\xi| \ln \ln |\xi| \quad \forall \xi \in \mathbb{R}^d, |\xi| > e.
$$
\end{thm}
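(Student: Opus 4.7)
The plan is to mimic Kaufman's original construction of Salem-type measures on $E(\tau) \subset \RR$, transplanting it to $\RR^2 \cong \CC$ with Gaussian primes replacing rational primes. Fix the ball $B$, choose a rapidly increasing sequence $Q_k \to \infty$, a Schwartz bump $\phi$ on $\RR^2$ with $\int \phi = 1$ and $\hat\phi$ rapidly decaying, and a smooth cutoff $\chi_B$ supported near $B$. I would define the probability densities
$$
\mu_k(x) = c_k \, \chi_B(x) \sum_{\substack{q \in P \\ Q_k \leq |q|_2 < 2Q_k}} \sum_{r \in \ZZ^2} Q_k^{2(1+\tau)} \phi\!\rbr{Q_k^{1+\tau}(qx - r)},
$$
where $qx$ is the complex product and $c_k \sim (\log Q_k)/Q_k^2$ is forced by the prime number theorem for $\ZZ[i]$. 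Every $x \in \supp\mu_k$ satisfies $|qx-r| \lesssim Q_k^{-(1+\tau)} \leq |q|_2^{-\tau}$ for some Gaussian prime $q$ of norm $\sim Q_k$; taking a weak-$*$ subsequential limit $\mu$ (with $Q_{k+1}$ chosen large enough compared to $Q_k$) produces a probability measure supported in $E_{\ast}(P,\tau) \cap B$.

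Next, I would compute $\hat{\mu_k}$ by Poisson summation in $r$. The identification $\RR^2 \cong \CC$ yields the key identity $\abr{\ell, qx} = \abr{\ell \bar q, x}$ for $\ell \in \ZZ^2$, $q \in \ZZ[i]$, $x \in \RR^2$, since the real $2 \times 2$ matrix representing multiplication by $q$ has transpose equal to the matrix of multiplication by $\bar q$. This gives
$$
\hat{\mu_k}(\xi) = c_k \sum_{\substack{q \in P \\ |q|_2 \sim Q_k}} \sum_{\ell \in \ZZ^2} \hat\phi\!\rbr{\ell/Q_k^{1+\tau}} \hat{\chi_B}(\xi - \ell \bar q).
$$
For $|\xi| > e$ I pick $k$ with $Q_k^{1+\tau} \sim |\xi|$, so $Q_k \sim |\xi|^{1/(1+\tau)}$. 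Schwartz decay of $\hat{\chi_B}$ and $\hat\phi$ localizes the significant contributions to pairs $(q, \ell)$ with $|\xi - \ell \bar q| = O(1)$, while contributions from the regime $|\xi - \ell \bar q| \sim 2^j$ are handled by a dyadic decomposition.

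The decisive step is the arithmetic count. Only $O(1)$ Gaussian integers $n \in \ZZ[i]$ lie within distance $1$ of $\xi$, and each factorization $n = \ell \bar q$ with $q$ a Gaussian prime corresponds to a Gaussian prime divisor of $n$. Because $\ZZ[i]$ is a UFD whose divisor bound has the same shape as in $\ZZ$, one has the Gaussian Hardy--Ramanujan estimate $\omega(n) \lesssim \log|n|/\log\log|n|$, so the number of critical pairs is $\lesssim \log|\xi|/\log\log|\xi|$. Combining this with $c_k \sim (\log|\xi|)/|\xi|^{2/(1+\tau)}$ and a careful treatment of the dyadic tails produces the claimed bound
$$
|\hat\mu(\xi)| \lesssim |\xi|^{-2/(1+\tau)} \log|\xi| \, \log\log|\xi|,
$$
which is inherited by the weak-$*$ limit. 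The main obstacle will be the precise log-factor accounting in the tail sums $\sum_{(q,\ell) : |\xi - \ell\bar q| \sim 2^j}$: a crude bound yields $(\log|\xi|)^2$ in place of $\log|\xi| \log\log|\xi|$, so one must mirror Kaufman's delicate handling of the analogous sums in $\ZZ$. Everything else — the measure construction, the support analysis, and the Poisson computation — is essentially formal once one exploits the Gaussian-integer setup and the fact that the divisor theory of $\ZZ[i]$ matches that of $\ZZ$ in strength.
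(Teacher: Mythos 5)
Your arithmetic and Fourier-analytic preparations are sound and match the paper's single-scale lemma: identifying $\RR^2$ with $\CC$, the identity $\abr{\ell,qx}=\abr{\ell\bar q,x}$ via the transpose of the multiplication-by-$q$ matrix, Poisson summation in $r$, the prime-counting bound $|\{q\in P:|q|\sim M\}|\gtrsim M^2/\ln M$, and unique factorization in $\ZZ[i]$. But the measure construction is a genuine gap, and it is the heart of the theorem.

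Your $\mu_k$ is, up to normalization, $\chi_B F_{M_k}\,dx$ with $M_k\sim Q_k$, a \emph{single-scale} density. As $Q_k\to\infty$, the sequence $\mu_k$ converges weak-$*$ (along any subsequence) to a constant multiple of $\chi_B\,dx$. One sees this directly from the Fourier side: $\widehat{\mu_k}(\xi)=c'\sum_\ell\widehat{F_{M_k}}(\ell)\,\widehat{\chi_B}(\xi-\ell)$, and $\widehat{F_{M_k}}(\ell)=0$ for $0<|\ell|\leq M_k/4$, so for fixed $\xi$ every term except $\ell=0$ vanishes or tends to $0$ as $M_k\to\infty$. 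Thus the limit is absolutely continuous, supported on all of $B$, and has nothing to do with $E_\ast(P,\tau)$. Lacunarity of the $Q_k$ does not help, and picking $k$ so that $Q_k^{1+\tau}\sim|\xi|$ is also irrelevant once you pass to the limit measure. The fix is the Riesz-product/Cantor-type construction that Kaufman and the paper use: set $d\mu_k=\chi_0 F_{M_1}F_{M_2}\cdots F_{M_k}\,dx$ and let $\mu$ be the weak limit. The multiplicative structure forces $\supp(\mu)\subseteq\supp(\chi_0)\cap\bigcap_k\supp(F_{M_k})$, and with $M_{k+1}\geq 2M_k$ each point of this intersection admits infinitely many \emph{distinct} pairs $(q^{(k)},r^{(k)})\in P(M_k)\times\ZZ^2$ with $|q^{(k)}x-r^{(k)}|\leq|q^{(k)}|^{-\tau}$, which is exactly membership in $E_\ast(P,\tau)$. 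The Fourier decay is then obtained recursively: one proves that for any $\chi\in C^K_c$ and any $\delta>0$ there is an $M_\ast$ with $|\widehat{\chi F_{M_\ast}}(\xi)-\widehat{\chi}(\xi)|\leq\delta g(\xi)$, applies this with $\chi=\chi_0F_{M_1}\cdots F_{M_{k-1}}$ and $\delta=2^{-k-2}$, and sums.

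This also explains your confusion about the log factors. The single-scale bound is $|\widehat{F_M}(\ell)|\lesssim|\ell|^{-a}\ln|\ell|$ (no $\ln\ln$), obtained from $|P(M)|\gtrsim M^2/\ln M$ together with the sharp divisor count $|P(M)\cap D(\bar\ell)|\lesssim\ln|\ell|/\ln M$ — the number of Gaussian-prime factors of $\bar\ell$ with modulus $\sim M$, \emph{not} the total prime-factor count $\omega(n)\lesssim\ln|n|/\ln\ln|n|$ you invoke, which is both the wrong quantity and weaker here. The extra $\ln\ln|\xi|$ in the theorem's bound is then a byproduct of the recursion, not of the arithmetic of $\ZZ[i]$: to absorb $\delta^{-1}$ one chooses $g(\xi)=|\xi|^{-a}\ln|\xi|\ln\ln|\xi|$, so that $|\ell|^{-a}\ln|\ell|\leq\delta g(\ell)$ holds for all $|\ell|>M/4$ once $M\geq 8\exp(\exp(C/\delta))$. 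Without the product construction there is no $\delta$-recursion, and the $\ln\ln$ has no natural source, which is why you found yourself unable to account for it.
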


By adapting some ideas of Hambrook \cite{hambrook-explicit} to the present setting, one readily obtains a more general result than Theorem \ref{main-thm}. The statement requires some preparation. 
Let $Q$ be an infinite subset of $\ZZ^2$, let $\Psi: \ZZ^2 \rightarrow [0,\infty)$ be positive on $Q$, and let $\theta \in \mathbb{R}^2$.
Define
$$
E_{\ast}(Q,\Psi,\theta) = \cbr{x \in \mathbb{R}^2 : |qx - r - \theta| \leq \Psi(q) \text{ for infinitely many } (q,r) \in Q \times \ZZ^2}. 
$$
Evidently, $E_{\ast}(\tau) = E_{\ast}(\ZZ^2,q \mapsto |q|^{-\tau},0)$. For $M > 0$, define
$$
Q(M) = \cbr{q \in Q : M/2 < |q| \leq M}, \quad \epsilon(M) = \min_{q \in Q(M)} \Psi(q).
$$
A function $h:(0,\infty) \rightarrow \mathbb{R}$ will be called slowly growing if there is an $M > 0$ such that $h$ is positive and non-decreasing on $[M,\infty)$ and 
${ \displaystyle \lim_{x \rightarrow \infty}} \frac{\ln h(x)}{\ln x} = 0$; the limit is often abbreviated as $h(x) = x^{o(1)}$. There always exists a number $a \geq 0$, a slowly growing function $h:(0,\infty) \rightarrow \mathbb{R}$, and an unbounded set $\mathcal{M} \subseteq (0,\infty)$ such that
$$
|Q(M)|\epsilon(M)^a h(M) \geq M^a \quad \forall M \in \mathcal{M}.
$$
\begin{thm}\label{general-thm}
For every closed ball $B \subseteq \mathbb{R}^d$, there exists a Borel probability measure with support contained in $E_{\ast}(Q,\Psi,\theta) \cap B$ such that 
$$
|\widehat{\mu}(\xi)| \lesssim |\xi|^{-a} \exp( \ln |\xi| / \ln \ln |\xi|) h(4|\xi|) \quad \forall \xi \in \mathbb{R}^2, |\xi| > e.
$$
\end{thm}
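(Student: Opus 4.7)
The plan is to extend the proof of Theorem~\ref{main-thm} by replacing $\ZZ^2$, $q\mapsto|q|^{-\tau}$, and $0$ with $Q$, $\Psi$, and $\theta$, essentially transporting Hambrook's generalization \cite{hambrook-explicit} of Kaufman's construction into the Gaussian-integer setting. Fix $B\subseteq\RR^2$, a Schwartz bump $\phi:\RR^2\to[0,\infty)$ with $\int\phi=1$ and $\supp\widehat\phi\subseteq\cbr{|\xi|\le 1/2}$, and a smooth cutoff $\psi$ adapted to $B$. For each $M\in\mathcal{M}$ I would introduce the density
\[
f_M(x)\;=\;\frac{1}{|Q(M)|}\,\psi(x)\sum_{q\in Q(M)}\sum_{r\in\ZZ^2}\epsilon(M)^{-2}\phi\!\rbr{\frac{qx-r-\theta}{\epsilon(M)}},
\]
where $qx$ and $\bar q$ denote Gaussian (complex) multiplication and conjugation. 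Up to absolute constants, $f_M\,dx$ is a probability measure whose support lies in $A_M:=B\cap\cbr{x : |qx-r-\theta|\le\epsilon(M)\text{ for some }(q,r)\in Q(M)\times\ZZ^2}$.

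Next, I would compute $\widehat{f_M}(\xi)$ by Poisson summation in $r\in\ZZ^2$, viewing the inner sum as a $\ZZ^2$-periodic function of $y=qx-\theta$. Since the real adjoint of the $\RR^2$-linear map $x\mapsto qx$ is multiplication by $\bar q$, this gives
\[
\widehat{f_M}(\xi)\;=\;\frac{1}{|Q(M)|}\sum_{q\in Q(M)}\sum_{m\in\ZZ^2}\widehat\phi\bigl(\epsilon(M)m\bigr)\,e^{-2\pi i\abr{m,\theta}}\,\widehat\psi(\xi-\bar q m).
\]
The $m=0$ contribution equals $\widehat\psi(\xi)$, which is Schwartz and negligible. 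For $m\ne 0$, the compact support of $\widehat\phi$ restricts $m$ to $|m|\lesssim\epsilon(M)^{-1}$, while rapid decay of $\widehat\psi$ restricts the Gaussian integer $n:=\bar q m$ to a unit neighborhood of $\xi$.

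Reindexing the double sum by $n=\bar q m\in\ZZ[i]$, the number of factorizations with $q\in Q(M)$ is at most the Gaussian divisor count $d_{\ZZ[i]}(n)\lesssim\exp(\ln|n|/\ln\ln|n|)$, which has the same shape as the integer divisor bound (cf.~\cite{HW}, as flagged before the statement). Combining this with $\|\widehat\phi\|_\infty<\infty$ and $|n-\xi|\lesssim 1$ yields
\[
|\widehat{f_M}(\xi)|\;\lesssim\;|\widehat\psi(\xi)|\;+\;\frac{\exp(\ln|\xi|/\ln\ln|\xi|)}{|Q(M)|}.
\]
Choosing $M=M(\xi)\in\mathcal{M}$ suitably so that the effective Fourier radius $M/\epsilon(M)$ matches $|\xi|$ (and $M\lesssim 4|\xi|$), the hypothesis $|Q(M)|\epsilon(M)^a h(M)\ge M^a$ becomes $|Q(M)|^{-1}\lesssim|\xi|^{-a}h(4|\xi|)$, producing the desired scale-by-scale decay of $\widehat{f_M}$.

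Finally, I would extract a weak-$\ast$ subsequential limit $\mu$ of $f_{M_k}\,dx$ along a rapidly increasing sequence $M_k\in\mathcal{M}$ via Banach--Alaoglu; the uniform Fourier bound then transfers to $\widehat\mu$. To guarantee $\supp\mu\subseteq E_\ast(Q,\Psi,\theta)$, I would build the sequence through a Cantor-like nested refinement so that $\mu$ is supported in $\bigcap_{j}\bigcup_{k\ge j}A_{M_k}$, forcing $\mu$-a.e.\ point to satisfy the defining Diophantine inequality for infinitely many $(q,r)$. The main obstacles I anticipate are (i) executing this nested construction without degrading the decay exponent, which is managed by the rapid growth of $M_k$ and the slow growth of $h$, and (ii) verifying the normalization $|Q(M)|^{-1}\int f_M\asymp 1$ in the inhomogeneous regime; the shift $\theta$ contributes only the unit-modulus phase $e^{-2\pi i\abr{m,\theta}}$ and is inert at the divisor-bound step, so the Gaussian divisor estimate remains the analytic crux exactly as in Theorem~\ref{main-thm}.
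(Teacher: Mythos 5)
Your Fourier calculation for $\widehat{f_M}$ (Poisson summation in $r$, the adjoint identity $\langle m,qx\rangle=\langle m\bar q,x\rangle$, and the reindexing by $n=\bar qm$ followed by the Gaussian divisor bound) is essentially the same as the paper's Lemmas~\ref{lemma 1} and~\ref{lemma 2} and is correct, and the observation that $\theta$ contributes only an inert unit-modulus phase is right. The gap is in the final limiting step. You propose to take a weak-$\ast$ subsequential limit of the sequence $f_{M_k}\,dx$, but that limit is just $\psi\,dx$: the periodic factor $F_{M_k}=\frac{1}{|Q(M_k)|}\sum_{q\in Q(M_k)}\Phi^{\epsilon(M_k)}_q$ has $\widehat{F_{M_k}}(0)=1$ and no nonzero Fourier coefficients in $0<|\ell|\le M_k/4$, so for any Schwartz test function $\eta$ one gets $\int \eta\, f_{M_k}\,dx=\widehat{\eta\psi}(0)+O(M_k^{-N})\to\int\eta\psi\,dx$. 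The weak limit is therefore absolutely continuous with full support in $B$ and certainly not concentrated on the lim\,sup set $\bigcap_j\bigcup_{k\ge j}A_{M_k}$; in general, weak convergence with $\supp\mu_k\subseteq A_{M_k}$ only places $\supp\mu$ inside $\bigcap_j\overline{\bigcup_{k\ge j}A_{M_k}}$, which is not the same set. Your remark about ``Cantor-like nested refinement'' gestures at the fix but is not actually implemented by the framework you describe.

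The paper's proof (sections~\ref{part 2} and~\ref{part 3}) handles this by taking \emph{products}: it sets $d\mu_k=\chi_0 F_{M_1}\cdots F_{M_k}\,dx$ and proves (Lemma~\ref{main-lemma}) that multiplying by a new $F_{M_{k+1}}$ perturbs the Fourier transform by at most $2^{-k-2}g(\xi)$ uniformly in $\xi$, so the $\widehat{\mu_k}$ form a Cauchy sequence in $L^\infty$ whose limit inherits the decay bound at \emph{all} scales simultaneously. The product structure also gives $\supp\mu\subseteq\bigcap_k\supp F_{M_k}$, and Lemma~\ref{support lemma} (with $M_{k+1}\ge 2M_k$) shows this intersection sits inside the target Diophantine set. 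This simultaneously resolves both issues you elide: the single $f_M$ only has the stated Fourier decay when $|\xi|\asymp M/\epsilon(M)$ (not at all scales, and you cannot choose $M$ as a function of $\xi$ for a fixed measure), and the support is controlled by nesting. To complete your proof you would need to replace the subsequential-limit step with a recursive-multiplication step of this type, i.e., prove the analogue of Lemma~\ref{main-lemma} with $\ZZ^2(M)$ replaced by $Q(M)$, $\epsilon(M)=\min_{q\in Q(M)}\Psi(q)$, and the shift $\theta$ absorbed into the phase, and then run the argument of section~\ref{part 3}.
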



The proof of Theorem \ref{main-thm} is divided over sections \ref{part 1}, \ref{part 2}, and \ref{part 3}. 
In section \ref{prime-outline}, we explain how to modify the proof of Theorem \ref{main-thm} to obtain Theorem \ref{prime-thm}. 
We leave the proof of Theorem \ref{general-thm} as an exercise for the reader. It is a simple modification of the proof of Theorem \ref{main-thm} using the ideas of \cite{hambrook-explicit}.


\section{Proof of Theorem \ref{main-thm}: The Function $F_M$}\label{part 1}

For $f:\mathbb{R}^2 \rightarrow \CC$, we will abuse the notation $\widehat{f}$ as follows. 
If $\int_{\mathbb{R}^2} |f(x)| dx < \infty$, then 
$$
\widehat{f}(\xi) = \int_{\mathbb{R}^2} e^{-2 \pi i \abr{\xi, x}} f(x) dx \quad \forall \xi \in \mathbb{R}^2.
$$
If $\int_{[0,1]^2} |f(x)| dx < \infty$ and $f$ is $\ZZ^2$-periodic, then 
$$
\widehat{f}(\xi) = \int_{[0,1]^2} e^{-2 \pi i \abr{\xi, x}} f(x) dx \quad \forall \xi \in \mathbb{R}^d.
$$
There is no ambiguity because if $\int_{\mathbb{R}^2} |f(x)| dx < \infty$ and $f$ is $\ZZ^2$-periodic, then $\widehat{f} = 0$ under either definition. Remember $\abr{\xi, x} = \xi_1 x_1 + \xi_2 x_2$ for $\xi,x \in \RR^2$, 
even though we have identified $\RR^2$ and $\CC$. 

Define $a = 2/(1+\tau)$. Fix a positive integer $K > 2 + a$. Fix an arbitrary non-negative $C^{K}$ function on $\mathbb{R}^2$ with $\int_{\mathbb{R}^2} \phi(x) dx = 1$ and $\text{supp}(\phi) \subseteq [-1,1]^2$. Since $\phi \in C^{K}_{c}(\mathbb{R}^2)$, 
\begin{align}\label{phi decay}
|\widehat{\phi}(\xi)| \lesssim (1+|\xi|)^{-K} \quad \forall \xi \in \mathbb{R}^2.
\end{align}
For $\epsilon > 0$, define
$$
\phi^{\epsilon}(x) = \epsilon^{-2} \phi(\epsilon^{-1} x) \quad \forall x \in \mathbb{R}^2,
$$
$$
\Phi^{\epsilon}(x) = \sum_{r \in \ZZ^2} \phi^{\epsilon}(x-r) \quad \forall x \in \mathbb{R}^2.
$$
Then $\Phi^{\epsilon}$ is $\ZZ^2$-periodic, non-negative, $C^{K}$, and
$$
\widehat{\Phi^{\epsilon}}(k) = \widehat{\phi^{\epsilon}}(k) = \widehat{\phi}(\epsilon k) \quad \forall k \in \ZZ^2.
$$
Therefore 
$$
\Phi^{\epsilon}(x) = \sum_{k \in \ZZ^2} \widehat{\phi}(\epsilon k) e^{2 \pi i \abr{k,x}}
$$
uniformly for all $x \in \mathbb{R}^2$.    
For $q \in \ZZ^2$, define
$$
\Phi^{\epsilon}_{q}(x) = \Phi^{\epsilon}(qx) \quad \forall x \in \mathbb{R}^2.
$$
\begin{lemma}\label{lemma 1}
For all $\ell \in \ZZ^2$, 
$$
\widehat{\Phi^{\epsilon}_{q}}(\ell)
=
\left\{
\begin{array}{cl}
\widehat{\phi}(\epsilon \ell / \overline{q}) & \text{if } q \in D(\overline{\ell}) \\
0 & \text{otherwise }
\end{array} \right.
$$
where
$$
D(\ell) = \cbr{q \in \ZZ^2 : \ell / q \in \ZZ^2}.
$$
\end{lemma}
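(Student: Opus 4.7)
The plan is a direct Fourier-series computation, with the Gaussian-integer structure appearing only through one algebraic identity that converts the complex multiplication $qx$ into a change of frequency in the exponential.

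First I would record the key identity: for complex numbers $a,b$ identified with $\RR^2$, $\abr{a,b}=\Re(\overline{a}b)$. In particular, for $k,q,x\in\CC\simeq\RR^2$,
\begin{equation*}
\abr{k, qx}
= \Re(\overline{k}\,qx)
= \Re\bigl(\overline{\overline{q}k}\,\cdot x\bigr)
= \abr{\overline{q}k,\,x}.
\end{equation*}
This is the single place where the multiplicative structure of $\CC$ enters.

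Next, since $\Phi^{\epsilon}\in C^K$ is $\ZZ^2$-periodic with uniformly convergent Fourier expansion
$\Phi^{\epsilon}(y)=\sum_{k\in\ZZ^2}\widehat{\phi}(\epsilon k)e^{2\pi i\abr{k,y}}$, substituting $y=qx$ and using the identity above gives
\begin{equation*}
\Phi^{\epsilon}_q(x)=\sum_{k\in\ZZ^2}\widehat{\phi}(\epsilon k)\,e^{2\pi i\abr{\overline{q}k,\,x}}
\end{equation*}
with uniform convergence in $x$. Multiplying by $e^{-2\pi i\abr{\ell,x}}$ and integrating term-by-term over $[0,1]^2$, the orthogonality relation $\int_{[0,1]^2}e^{2\pi i\abr{m,x}}dx=\one_{m=0}$ for $m\in\ZZ^2$ picks out only those $k\in\ZZ^2$ with $\overline{q}k=\ell$.

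Finally I would translate this condition into the notation of the lemma. The equation $\overline{q}k=\ell$ with $k\in\ZZ^2$ has a (unique) Gaussian-integer solution $k=\ell/\overline{q}$ precisely when $\overline{q}$ divides $\ell$ in $\ZZ[i]$, equivalently when $q$ divides $\overline{\ell}$, i.e.\ $q\in D(\overline{\ell})$; in that case the single surviving term is $\widehat{\phi}(\epsilon\ell/\overline{q})$, and otherwise the integral is $0$. The only mildly delicate point is keeping the conjugates straight so that the divisibility condition matches $D(\overline{\ell})$ rather than $D(\ell)$; everything else is routine bookkeeping, and no divisor bound or other number-theoretic input is needed at this stage.
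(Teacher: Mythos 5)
Your proof is correct and is essentially the same as the paper's: Fourier-expand $\Phi^{\epsilon}$, substitute $y=qx$, use orthogonality over $[0,1]^2$ to isolate the term $k=\ell/\overline{q}$, and conjugate to translate the divisibility condition into $q\in D(\overline{\ell})$. The only difference is presentational — you encapsulate the key bilinear identity $\abr{k,qx}=\abr{\overline{q}k,x}$ via $\abr{a,b}=\Re(\overline{a}b)$, whereas the paper verifies it by an explicit coordinate computation; this is a slightly cleaner way of packaging the same step.
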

\begin{proof} We have
\begin{align*}
\widehat{\Phi^{\epsilon}_{q}}(\ell)
=
\int_{[0,1]^2} e^{-2 \pi i \abr{\ell,x}} \sum_{k \in \ZZ^2} \widehat{\phi}(\epsilon k) e^{2 \pi i \abr{k,qx}} dx
=
\sum_{k \in \ZZ^2} \widehat{\phi}(\epsilon k) \int_{[0,1]^2} e^{ 2\pi i (  \abr{k, qx} - \abr{\ell, x}  )   } dx.
\end{align*}
But
\begin{align*}
\abr{k, qx} - \abr{\ell, x} 
&= k_1(qx)_1 + k_2(qx)_2 - \ell_1 x_1 - \ell_2 x_2  \\
&= k_1(q_1x_1 - q_2x_2) + k_2(q_1x_2 + q_2x_1) - \ell_1 x_1 - \ell_2 x_2  \\
&= (k_1q_1 + k_2q_2 - \ell_1)x_1 + (k_2q_1 - k_1q_2 - \ell_2 )x_2 \\
&= ((k\overline{q})_1 - \ell_1) x_1 + ((k\overline{q})_2 - \ell_2)x_2 \\
&= (k\overline{q} - \ell)_1 x_1 + (k\overline{q} - \ell)_2x_2. 
\end{align*}
Therefore
\begin{align*}
\widehat{\Phi^{\epsilon}_{q}}(\ell)
=
\sum_{k \in \ZZ^2} \widehat{\phi}(\epsilon k) 
\int_{[0,1]} \exp( 2\pi i (k\overline{q} - \ell)_1 x_1 ) dx_1 
\int_{[0,1]} \exp( 2\pi i (k\overline{q} - \ell)_2 x_2 ) dx_2. 
\end{align*}
The product of the integrals is $1$ if $\ell / \overline{q} = k$ and is $0$ otherwise.
So $\widehat{\Phi^{\epsilon}_{q}}(\ell) = \widehat{\phi}(\epsilon \ell /\overline{q})$ if $\ell / \overline{q} \in \ZZ^2$ and $\widehat{\Phi^{\epsilon}_{q}}(\ell) = 0$ otherwise. 
Note that $\ell / \overline{q} \in \ZZ^2$ if and only if $\overline{\ell} / q \in \ZZ^2$. 
\end{proof}

For $M > 0$, define 
$$
\ZZ^2(M) = \cbr{q \in \ZZ^2 : M/2 < |q| \leq M},
\quad
\epsilon(M) = \frac{1}{2}M^{-\tau},
$$ 
and
$$
F_M(x) = \frac{1}{|\ZZ^2(M)|} \sum_{q \in \ZZ^2(M)} \Phi^{\epsilon(M)}_{q}(x) \quad \forall x \in \mathbb{R}^2.
$$
Then $F_M$ is $\ZZ^2$-periodic, non-negative, $C^{K}$, and (by Lemma \ref{lemma 1}) 
\begin{align}\label{rewrite}
\widehat{F_M}(\ell) = \frac{1}{|\ZZ^2(M)|} \sum_{q \in \ZZ^2(M) \cap D(\overline{\ell})} \widehat{\phi}(\epsilon(M) \ell / \overline{q}) \quad \forall \ell \in \ZZ^2.
\end{align}
Since $\widehat{\phi}(0) = \int_{\mathbb{R}_d} \phi(x) dx = 1$, we have
\begin{align}\label{F-1}
\widehat{F_{M}}(0) = 1,
\end{align}
and consequently
\begin{align}\label{F-2}
|\widehat{F_{M}}(\ell)| \leq 1 \quad \forall \ell \in \mathbb{Z}^2.
\end{align}
Suppose $\ell \in \ZZ^2$ with $\ell \neq 0$. If $q \in \ZZ^2(M) \cap D(\overline{\ell})$, then $M/2 < |q|_2$ and $|\overline{\ell}/q|_2 \geq 1$, which implies $|\ell|_2 > M/2$. So if $|\ell|_2 \leq M/2$, then the sum in \eq{rewrite} is empty and $\widehat{F_M}(\ell) = 0$. Note $|\ell| \leq M/4$ implies $|\ell|_2 \leq M/2$.
Therefore
\begin{align}\label{F-3}
\widehat{F_M}(\ell) = 0 \quad \forall \ell \in \ZZ^2, 0 < |\ell| \leq M/4.
\end{align}

\begin{lemma}\label{lemma 2}
For every $\zeta > \ln 2$  there exists $L_{\zeta} \in \NN$ such that 
$$
|\widehat{F_M}(\ell)| \lesssim |\ell|^{-a} \exp(\zeta \ln |\ell| / \ln \ln |\ell|) \quad \forall \ell \in \ZZ^2, |\ell| \geq L_{\zeta}.
$$
\end{lemma}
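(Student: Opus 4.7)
The plan is to estimate \eqref{rewrite} directly by combining three ingredients: the rapid decay \eqref{phi decay} of $\widehat{\phi}$, the uniform count $|\ZZ^2(M)| \asymp M^2$, and the cardinality of the divisor set $D(\overline\ell)$. For the last, since $\ZZ[i]$ is a UFD with completely multiplicative norm $N(q)=|q|_2^2$, the classical Wigert-style extremal argument transfers to give, for each fixed $\zeta > \ln 2$, the bound $|D(\ell)| \lesssim \exp(\zeta \ln|\ell|/\ln\ln|\ell|)$ for all $|\ell|$ above some threshold $L_\zeta$. Inside the argument of $\widehat{\phi}$, the exact multiplicativity $|z_1 z_2|_2 = |z_1|_2 |z_2|_2$ on $\CC$ combined with the norm equivalence $|\cdot|_\infty \asymp |\cdot|_2$ gives $|\epsilon(M)\ell/\overline q| \asymp \epsilon(M)|\ell|/|q|$ uniformly in $q \in \ZZ^2(M)$.

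The range $0 < |\ell| \leq M/4$ is settled by \eqref{F-3}, which forces $\widehat{F_M}(\ell) = 0$, so I may assume $|\ell| > M/4$. The natural place to split is the balance point $M^{1+\tau}$, the value at which the argument of $\widehat\phi$ crosses unity. For $M/4 < |\ell| \leq M^{1+\tau}$, I insert the trivial bound $|\widehat{\phi}| \leq 1$ into \eqref{rewrite} to obtain $|\widehat{F_M}(\ell)| \leq |D(\overline\ell)|/|\ZZ^2(M)| \lesssim |D(\ell)|/M^2$, and then exploit $M \geq |\ell|^{1/(1+\tau)}$ to replace $M^{-2}$ by $|\ell|^{-a}$, where $a = 2/(1+\tau)$. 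For $|\ell| > M^{1+\tau}$, every $q \in \ZZ^2(M)$ satisfies $|\epsilon(M)\ell/\overline q| \gtrsim |\ell|/M^{1+\tau} > 1$, and \eqref{phi decay} gives $|\widehat{\phi}(\epsilon(M)\ell/\overline q)| \lesssim (M^{1+\tau}/|\ell|)^K$, so
$$|\widehat{F_M}(\ell)| \lesssim |D(\ell)| \, \frac{M^{K(1+\tau)-2}}{|\ell|^K}.$$
The choice $K > 2 + a$ made at the outset of the section ensures $K(1+\tau) > 2$, so $M \leq |\ell|^{1/(1+\tau)}$ bounds the $M$-factor by $|\ell|^{K-a}$, again yielding $|\widehat{F_M}(\ell)| \lesssim |D(\ell)| |\ell|^{-a}$.

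Both cases thus collapse to $|\widehat{F_M}(\ell)| \lesssim |D(\ell)| |\ell|^{-a}$, and applying the Gaussian divisor bound closes the argument. The main difficulty is bookkeeping rather than conceptual: one must verify that all implicit constants depend only on $\phi$, $K$, and $\tau$, not on $M$ or $\ell$, which works because the $M$-dependence in the two cases exactly balances at the threshold $|\ell| = M^{1+\tau}$; and one must check that the Wigert-type divisor bound transfers from $\ZZ$ to $\ZZ[i]$ in the stated form. No essential new ideas beyond the classical Kaufman argument are needed here --- identifying $\RR^2$ with $\CC$ is what makes $\widehat{\Phi^\epsilon_q}$ supported on $\overline q$-multiples of $\ZZ^2$ via Lemma \ref{lemma 1}, after which the analysis is formally identical to Kaufman's one-dimensional computation.
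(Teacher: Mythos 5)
Your proof is correct and follows essentially the same path as the paper: apply \eqref{rewrite}, then combine the decay \eqref{phi decay} of $\widehat{\phi}$, the count $|\ZZ^2(M)|\gtrsim M^2$, and the Gaussian divisor bound (Lemma~\ref{divisor Gaussian}). The only difference is organizational: where you split at $|\ell|\asymp M^{1+\tau}$ and treat the two regimes separately, the paper handles both at once via the single inequality $(1+x)^{-K}\leq x^{-a}$ (valid for all $x>0$ since $K\geq a$), which gives $(1+(2M)^{-1}\epsilon(M)|\ell|)^{-K}\lesssim M^{2}|\ell|^{-a}$ directly and avoids the case distinction.
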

The proof of Lemma \ref{lemma 2} relies on the following divisor bound for the Gaussian integers (see for example \cite{HW}).
\begin{lemma}\label{divisor Gaussian}
For every $\zeta > \ln 2$ there exists $L_{\zeta} \in \NN$ such that 
$$
|D(\ell)| \leq \exp(\zeta \ln |\ell| / \ln \ln |\ell|) \quad \forall \ell \in \ZZ^2, |\ell| \geq L_{\zeta}.
$$
\end{lemma}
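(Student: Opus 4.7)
The plan is to establish Lemma \ref{divisor Gaussian} as the Gaussian-integer analogue of Wigert's classical maximal-order bound for the divisor function. The argument rests entirely on the fact that $\ZZ[i]$ is a Euclidean domain, hence a UFD, with four units $\pm 1, \pm i$; this reduces divisor-counting to a simple combinatorial formula and then the usual Wigert balancing applies verbatim.

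First I would translate $|D(\ell)|$ into a statement about the Gaussian prime factorization of $\ell$. Under the identification $\ZZ^2 \leftrightarrow \ZZ[i]$, a nonzero $q$ lies in $D(\ell)$ exactly when $q$ divides $\ell$ in $\ZZ[i]$. Writing $\ell = u \prod_j \pi_j^{a_j}$ with $u \in \{\pm 1, \pm i\}$ and $\pi_j$ distinct Gaussian primes, every divisor of $\ell$ has the form $v \prod_j \pi_j^{b_j}$ with $v$ a unit and $0 \leq b_j \leq a_j$, so
\[
|D(\ell)| \;=\; 4 \prod_j (a_j + 1).
\]

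Next I would copy the Wigert argument onto this formula. Set $N = N(\ell) = |\ell|_2^2 = \prod_j N(\pi_j)^{a_j}$ and fix a threshold $Y$, to be chosen. For primes with $N(\pi_j) > Y$, the elementary inequality $\ln(a+1) \leq a \ln 2$ (true for every $a \geq 0$) combined with the observation $\ln N \geq (\ln Y)\sum_{N(\pi_j) > Y} a_j$ gives a contribution of at most $(\ln 2) \ln N / \ln Y$ to $\ln \prod_j(a_j+1)$. For primes with $N(\pi_j) \leq Y$, the number of such Gaussian primes is $O(Y/\ln Y)$; this follows from Landau's prime ideal theorem for $\ZZ[i]$, or more elementarily from the fact that every Gaussian prime has norm $p$ or $p^2$ for a rational prime $p$, so the count is bounded by $2\pi(Y)$ and we can invoke the ordinary prime number theorem. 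Each such factor contributes $\ln(a_j+1) = O(\ln \ln N)$ since $a_j \leq \log_2 N$, for a total of $O(Y \ln \ln N / \ln Y)$.

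Finally, picking $Y$ slightly larger than $\ln N$ (for example $Y = (\ln N)^2$) makes the small-prime contribution $o(\ln N/\ln\ln N)$, yielding $\ln |D(\ell)| \leq (\ln 2 + o(1)) \ln N / \ln \ln N$. Since $|\ell| \asymp |\ell|_2 = N^{1/2}$, one immediately converts this to the stated bound in terms of $\ln |\ell|/\ln \ln |\ell|$ with $\zeta$ approaching the Wigert constant in the appropriate normalization. I expect no serious obstacle: the entire proof is bookkeeping once UFD delivers the product formula, and the classical Wigert argument lifts verbatim, so the reference to \cite{HW} for the rational-integer case really is all one needs.
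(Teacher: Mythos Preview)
The paper does not actually prove Lemma~\ref{divisor Gaussian}; it simply cites Hardy--Wright for the rational-integer case and leaves the Gaussian adaptation to the reader. Your outline is exactly what one would supply, and the strategy (UFD gives the multiplicative divisor formula, then Wigert's large/small prime split) is correct.

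There is, however, a genuine slip in the balancing step. With $Y = (\ln N)^2$ the small-prime contribution is
\[
O\!\left(\frac{Y}{\ln Y}\,\ln\ln N\right) \;=\; O\!\left((\ln N)^{2}\right),
\]
which is certainly not $o(\ln N/\ln\ln N)$. Wigert's argument requires $Y$ slightly \emph{smaller} than $\ln N$, not larger: with $Y = (\ln N)^{1/(1+\epsilon)}$ the number of Gaussian primes of norm at most $Y$ is $O(Y)$ trivially, so the small-prime term is $O\bigl((\ln N)^{1/(1+\epsilon)}\ln\ln N\bigr) = o(\ln N/\ln\ln N)$, while the large-prime term becomes $(1+\epsilon)(\ln 2)\,\ln N/\ln\ln N$.

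One further caution on your final conversion. Since $N = |\ell|_2^{2} \asymp |\ell|^{2}$, one has $\ln N \sim 2\ln|\ell|$ but $\ln\ln N \sim \ln\ln|\ell|$, so the bound the argument actually delivers is $\exp\bigl((2\ln 2 + o(1))\ln|\ell|/\ln\ln|\ell|\bigr)$, i.e.\ any $\zeta > 2\ln 2$ rather than $\zeta > \ln 2$. Your phrase ``the Wigert constant in the appropriate normalization'' papers over this factor of two; the extremal choice $\ell = \prod_{N(\pi)\le T}\pi$ shows $2\ln 2$ is sharp in the $|\ell|$-normalization, so the discrepancy lies in the lemma's stated constant rather than in your method.
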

\begin{proof}[Proof of Lemma \ref{lemma 2}]
Fix non-zero $\ell \in \ZZ^2$. By \eq{phi decay} and \eq{rewrite}, 
\begin{align*}
|\widehat{F_M}(\ell)| 
&\leq 
\frac{1}{|\ZZ^2(M)|} \sum_{q \in \ZZ^2(M) \cap D(\overline{\ell})} |\widehat{\phi}(\epsilon(M) \ell / \overline{q})| \\
&\lesssim
\frac{1}{|\ZZ^2(M)|} \sum_{q \in \ZZ^2(M) \cap D(\overline{\ell})} (1 + \epsilon(M) |\ell / \overline{q}|)^{-K} \\
&\leq
\frac{|\ZZ^2(M) \cap D(\overline{\ell})|}{|\ZZ^2(M)|}  (1 + (2M)^{-1} \epsilon(M) |\ell|)^{-K}.
\end{align*}
We estimate each factor in the last sum separately. 
Evidently, $|\ZZ^2(M)| \gtrsim M^2$. Since $K \geq a = 2/(1+\tau)$ and $\epsilon(M) = \frac{1}{2}M^{-\tau}$, we have
$$
(1 + (2M)^{-1} \epsilon(M) |\ell|)^{-K} \leq 4^{-a} M^2 |\ell|^{-a}.
$$
Obviously, $|\ZZ^2(M) \cap D(\overline{\ell})| \leq |D(\overline{\ell})| = |D(\ell)|$ . 
So applying Lemma \ref{divisor Gaussian} finishes the proof.
\end{proof}

\begin{lemma}\label{support lemma}
\begin{align}\label{supp 1}
\text{supp}(F_M) \subseteq \cbr{x \in \mathbb{R}^2 : |qx-r| \leq |q|^{-\tau} \text{ for some } (q,r) \in \ZZ^2(M) \times \ZZ^2}.
\end{align}
For any sequence of positive real numbers $(M_k)_{k=1}^{\infty}$ with $M_{k} \leq M_{k+1}/2$ for all $k \in \NN$, we have
\begin{align}\label{supp 2}
\bigcap_{k=1}^{\infty} \text{supp}(F_{M_k}) \subseteq E_{\ast}(\tau).
\end{align}
\end{lemma}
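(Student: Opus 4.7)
The plan is to establish \eqref{supp 1} by unpacking the layered definition of $F_M$ through $\Phi^{\epsilon}_q$, $\Phi^{\epsilon}$, and $\phi^{\epsilon}$ in order to trace any nonvanishing value of $F_M$ back to a single product $qx$ lying near a Gaussian integer $r$, and then to derive \eqref{supp 2} by applying \eqref{supp 1} at each scale $M_k$ and using the gap condition $M_{k+1} \geq 2M_k$ to produce infinitely many distinct approximants.

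For \eqref{supp 1}, I would suppose $F_M(x) > 0$. Since $F_M$ is a non-negative average of the functions $\Phi^{\epsilon(M)}_q(x) = \Phi^{\epsilon(M)}(qx)$ indexed by $q \in \ZZ^2(M)$, some summand must be positive, giving a $q \in \ZZ^2(M)$ with $\Phi^{\epsilon(M)}(qx) > 0$. Unfolding $\Phi^{\epsilon}(y) = \sum_{r \in \ZZ^2} \phi^{\epsilon}(y-r)$ and $\phi^{\epsilon}(y) = \epsilon^{-2}\phi(\epsilon^{-1}y)$, and using $\supp(\phi) \subseteq [-1,1]^2$, produces an $r \in \ZZ^2$ with $|qx-r| \leq \epsilon(M) = \tfrac{1}{2}M^{-\tau}$. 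Since $|q| \leq M$ gives $|q|^{-\tau} \geq M^{-\tau}$ in the relevant range $\tau \geq 0$, this yields $|qx-r| \leq |q|^{-\tau}$. Taking closures preserves the inclusion because the right-hand side of \eqref{supp 1} is a closed subset of $\RR^2$.

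For \eqref{supp 2}, I would fix $x \in \bigcap_{k=1}^\infty \supp(F_{M_k})$ and apply \eqref{supp 1} at each scale to extract $(q_k, r_k) \in \ZZ^2(M_k) \times \ZZ^2$ with $|q_k x - r_k| \leq |q_k|^{-\tau}$. The hypothesis $M_k \leq M_{k+1}/2$ then gives $|q_{k+1}| > M_{k+1}/2 \geq M_k \geq |q_k|$, so the norms $|q_k|$ are strictly increasing and the pairs $(q_k, r_k)$ are pairwise distinct. Hence infinitely many $(q,r) \in \ZZ^2 \times \ZZ^2$ witness $|qx-r| \leq |q|^{-\tau}$, placing $x$ in $E_{\ast}(\tau)$.

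The argument is essentially bookkeeping once the construction of $F_M$ is unwound; the only point requiring a moment of care is recognizing that the doubling condition $M_{k+1} \geq 2M_k$ is precisely what upgrades the ``some $(q,r)$'' quantifier of \eqref{supp 1} into the ``infinitely many $(q,r)$'' quantifier in the definition of $E_{\ast}(\tau)$.
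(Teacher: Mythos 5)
Your argument for \eqref{supp 2} is the same as the paper's. For \eqref{supp 1}, you take a slightly different route. The paper first shows that $F_M(x) > 0$ forces $|qx-r| \leq \tfrac{1}{2}|q|^{-\tau}$ for some $(q,r)$, and then upgrades this to all of $\text{supp}(F_M)$ by an explicit perturbation: given $x$ in the support, pick $x'$ with $F_M(x')>0$ and $|x-x'|_2 \leq \tfrac{1}{4}M^{-(1+\tau)}$, and show $|qx-r| \leq |qx-qx'| + |qx'-r| \leq |q|^{-\tau}$; the extra factor $\tfrac{1}{2}$ in $\epsilon(M)=\tfrac{1}{2}M^{-\tau}$ is precisely the slack consumed by this perturbation. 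You instead prove the inclusion $\{F_M>0\} \subseteq \text{RHS}$ directly (with the full $|q|^{-\tau}$, since $\tfrac{1}{2}M^{-\tau} \leq M^{-\tau} \leq |q|^{-\tau}$) and then pass to $\text{supp}(F_M) = \overline{\{F_M > 0\}}$ by asserting the right-hand side is closed. That assertion is true but deserves a line: the RHS is a finite union over $q \in \ZZ^2(M)$ of sets of the form $\{x : qx \in \ZZ^2 + [-|q|^{-\tau},|q|^{-\tau}]^2\}$, each of which is the continuous preimage of a $\ZZ^2$-periodic closed set and hence closed (equivalently, the union over $r \in \ZZ^2$ is locally finite). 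Your route is a bit cleaner conceptually and avoids the numerical bookkeeping, but it does quietly lean on this closedness fact, which the paper's perturbation argument sidesteps; the factor $\tfrac12$ in the definition of $\epsilon(M)$ is there precisely to support the paper's version of the argument, so under your approach that factor becomes unnecessary slack.
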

\begin{proof}
Rewrite $F_M$ as 
$$
F_M(x) = \frac{1}{|\ZZ^2(M)|} \sum_{q \in \ZZ^2(M)} \sum_{r \in \ZZ^2} \epsilon(M)^{-2} \phi(\epsilon(M)^{-1} (qx - r)) \quad \forall x \in \mathbb{R}^2.
$$
Suppose $x \in \mathbb{R}^2$ satisfies $F_M(x) > 0$. Since $\phi$ is non-negative and $\supp(\phi) \subseteq [-1,1]$, we must have some $q \in \ZZ^2(M)$ and $r \in \ZZ^2$ such that 
$$
|qx-r| \leq \epsilon(M) = \frac{1}{2}M^{-\tau} \leq \frac{1}{2}|q|^{-\tau}.
$$
More generally, suppose $x \in \text{supp}(F_M)$. Then we can find $x' \in \mathbb{\RR}^2$ such that $F_M(x') > 0$ and $|x-x'|_2 \leq \frac{1}{4}M^{-(1+\tau)}$. Therefore, by the argument above, there is some $q \in \ZZ^2(M)$ and $r \in \ZZ^2$ such that
$$
|qx-r| \leq |qx-qx'| + |qx'-r| \leq \frac{1}{4}M^{-(1+\tau)}|q|_2 + \frac{1}{2}|q|^{-\tau} \leq |q|^{-\tau}.
$$
This proves \eq{supp 1}.

If $x \in \text{supp}(F_{M_k})$ for every $k \in \NN$, we obtain for every $k \in \NN$ a pair $(q^{(k)},r^{(k)}) \in \ZZ^2(M_k) \times \ZZ^2$ with 
$|q^{(k)} x - r^{(k)}| \leq |q^{(k)}|^{-\tau}$. 
The pairs must be distinct because
$$
|q^{(k)}| \leq M_k \leq M_{k+1} / 2 < |q^{(k+1)}| 
\quad \forall k \in \NN.
$$
This proves \eq{supp 2}.
\end{proof}

\section{Proof of Theorem \ref{main-thm}: A Lemma For Recursion}\label{part 2}

\begin{lemma}\label{main-lemma}
For every $\delta > 0$, $M_0 > 0$, and $\chi \in C^{K}_{c}(\mathbb{R}^{2})$, there is an $M_{\ast} = M_{\ast}(\delta,M_0,\chi) \in \NN$ such that $M_{\ast} \geq M_0$ and 
\begin{align*}
|\widehat{\chi F_{M_{\ast}}}(\xi) - \widehat{\chi}(\xi)| \leq \delta g(\xi) \quad \forall \xi \in \mathbb{R}^{2},
\end{align*}
where
$$
g(\xi) = 
\left\{
\begin{array}{cl}
|\xi|^{-a} \exp( \ln |\xi|/\ln \ln |\xi|) & \text{if } \xi \in \mathbb{R}^2, |\xi| > e \\
1 & \text{if } \xi \in \mathbb{R}^2, |\xi| \leq e.
\end{array} \right.
$$
\end{lemma}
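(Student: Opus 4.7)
The plan is to expand $F_M$ as a Fourier series and integrate term-by-term against $\chi$. Since $F_M$ is $C^K$ and $\ZZ^2$-periodic with an absolutely convergent Fourier series, this gives $\widehat{\chi F_M}(\xi) = \sum_{\ell \in \ZZ^2} \widehat{F_M}(\ell)\, \widehat{\chi}(\xi - \ell)$. Combined with $\widehat{F_M}(0) = 1$ from \eqref{F-1} and the vanishing \eqref{F-3} of low-frequency coefficients, the difference collapses to the tail
$$
\widehat{\chi F_M}(\xi) - \widehat{\chi}(\xi) \;=\; \sum_{\ell \in \ZZ^2,\, |\ell| > M/4} \widehat{F_M}(\ell)\, \widehat{\chi}(\xi - \ell).
$$
The task reduces to bounding this tail by $\delta g(\xi)$ uniformly in $\xi$ by choosing $M = M_\ast$ sufficiently large.

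I would split into cases according to $|\xi|$ versus $M$. When $|\xi| \leq M/8$, every $\ell$ in the tail satisfies $|\xi - \ell| \geq |\ell|/2$, so the trivial bound $|\widehat{F_M}(\ell)| \leq 1$ from \eqref{F-2} combined with the $(1+|\cdot|)^{-K}$ decay of $\widehat{\chi}$ from \eqref{phi decay} yields a contribution of size $O(M^{2-K})$. Since $K > 2+a$ and $g(\xi) \geq (M/8)^{-a}$ on this range (the exponential factor in $g$ being at least $1$), the desired bound $\leq \delta g(\xi)$ follows once $M$ is large. When $|\xi| > M/8$, I would dyadically split the tail into three annuli in $|\ell|$: small ($M/4 < |\ell| \leq |\xi|/2$), middle ($|\xi|/2 < |\ell| < 2|\xi|$), and large ($|\ell| \geq 2|\xi|$). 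The small and large annuli each produce contribution $O(|\xi|^{2-K})$ by the same trivial estimate, which is dominated by $\delta g(\xi)$ whenever $|\xi|$ is sufficiently large, ensured by taking $M$ large since $|\xi| > M/8$.

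The main obstacle is the middle annulus, where the trivial bound $|\widehat{F_M}(\ell)| \leq 1$ is insufficient and one must invoke Lemma \ref{lemma 2}. The critical step is to choose $\zeta \in (\ln 2, 1)$, which is possible precisely because the Gaussian-integer divisor bound of Lemma \ref{divisor Gaussian} only requires $\zeta > \ln 2$ and $\ln 2 < 1$; this gap is exactly what powers the argument. Since $|\ell| \sim |\xi|$ on the middle annulus, Lemma \ref{lemma 2} gives $|\widehat{F_M}(\ell)| \lesssim |\xi|^{-a}\exp(\zeta' \ln|\xi|/\ln\ln|\xi|)$ uniformly on the annulus for some $\zeta' \in (\zeta, 1)$ and $|\xi|$ above a threshold, the slippage from $|\ell|$ to $|\xi|$ being absorbed into the small increase from $\zeta$ to $\zeta'$. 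Because $K > 2$, the sum $\sum_{\ell \in \ZZ^2} |\widehat{\chi}(\xi - \ell)|$ is uniformly bounded in $\xi$, so the middle-annulus contribution is at most $C|\xi|^{-a}\exp(\zeta'\ln|\xi|/\ln\ln|\xi|)$. Its ratio with $g(\xi)$ is $C\exp((\zeta'-1)\ln|\xi|/\ln\ln|\xi|) \to 0$ as $|\xi| \to \infty$, hence this is also $\leq \delta g(\xi)$ for $|\xi|$ above some $\delta$-dependent threshold. Choosing $M_\ast \in \NN$ with $M_\ast \geq M_0$ and $M_\ast/8$ exceeding all of the thresholds arising from the three estimates completes the proof.
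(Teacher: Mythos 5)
Your proposal is correct and follows essentially the same route as the paper: expand $F_M$ in its Fourier series to collapse the difference to the tail $\sum_{|\ell|>M/4}\widehat{F_M}(\ell)\widehat{\chi}(\xi-\ell)$, use the trivial bound $|\widehat{F_M}(\ell)|\leq 1$ when $\ell$ is far from $\xi$, and invoke Lemma~\ref{lemma 2} with $\zeta\in(\ln 2,1)$ when $|\ell|\sim|\xi|$, the gap $\zeta<1$ being what defeats the $\exp(\ln|\xi|/\ln\ln|\xi|)$ factor in $g$. The only cosmetic difference is that you split Case 2 into three annuli (trivial bound on both the inner and outer annuli, Lemma~\ref{lemma 2} only on the middle one), whereas the paper uses a two-way split and applies Lemma~\ref{lemma 2} to the entire region $|\ell|>|\xi|/2$, exploiting that $t\mapsto t^{-a}\exp(\zeta\ln t/\ln\ln t)$ is eventually decreasing; both versions work and rely on the same ideas.
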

The proof will show $M_{\ast}$ can be taken to be any sufficiently large positive number.
\begin{proof}
We begin by recording two auxiliary estimates. Since $\chi \in C^{K}_{c}(\mathbb{R}^{2})$,
\begin{align}\label{108}
|\widehat{\chi}(\xi)| \lesssim (1+|\xi|)^{-K} \quad \forall \xi \in \mathbb{R}^{2}.
\end{align}
For every $p > 2$, we have
\begin{align}\label{108-2}
\sup_{\xi \in \mathbb{R}^{2}} \sum_{\ell \in \ZZ^{2}} (1+|\xi - \ell|)^{-p} < \infty.
\end{align}

Fix $\xi \in \mathbb{R}^2$. We will write $\widehat{\chi F_{M}}(\xi) - \widehat{\chi}(\xi)$ in another form.  Since $F_M$ is $C^{K}$ and $\ZZ^2$-periodic, we have
$$
F_M(x) = \sum_{\ell \in \ZZ^{2}} \widehat{F_M}(\ell) e^{2 \pi i \ell \cdot x} \quad \forall x \in \mathbb{R}^{2}
$$
with uniform convergence. 
Since $\chi \in L^1(\mathbb{R}^{2})$, multiplying by $\chi$ and taking the Fourier transform yields 
\begin{align*}
\widehat{\chi F_M}(\xi) 
= \sum_{\ell \in \ZZ^{2}} \widehat{F_M}(\ell) \int_{\mathbb{R}^{2}} \chi(x) e^{2 \pi i (\ell - \xi) \cdot x} dx
= \sum_{\ell \in \ZZ^{2}} \widehat{F_M}(\ell) \widehat{\chi}(\xi-\ell).
\end{align*} 
Then, by \eq{F-1} and \eq{F-3}, we have 
\begin{align}\label{110-2}
\widehat{\chi F_{M}}(\xi) - \widehat{\chi}(\xi)
=
\sum_{\ell \in \ZZ^{2}} \widehat{\chi}(\xi-\ell) \widehat{F_M}(\ell) - \widehat{\chi}(\xi) 
=
\sum_{|\ell| > M/4} \widehat{\chi}(\xi-\ell) \widehat{F_M}(\ell).
\end{align}

Define $\eta = (K-2-a)/2$, which is positive by our choice of $K$. To estimate $\widehat{\chi F_{M}}(\xi) - \widehat{\chi}(\xi)$, we use \eq{110-2} and consider two cases. 

\textbf{Case 1:} $|\xi| < M/8$. 

If $|\ell| > M/4$, then $|\xi - \ell| > M/8 > |\xi|$. 
Hence by \eq{F-2}, \eq{108}, \eq{108-2}, and \eq{110-2} we have 
\begin{align*}
| \widehat{\chi F_{M}}(\xi) - \widehat{\chi}(\xi) |
&\lesssim
\sum_{|\ell| > M/4} (1+|\xi - \ell|)^{-K}
= 
\sum_{|\ell| > M/4} (1+|\xi - \ell|)^{-a-\eta-(2+\eta)} \\
&\leq (1 + |\xi|)^{-a} (1 + M/8)^{-\eta} \sum_{|\ell| > M/4} (1+|\xi - \ell|)^{-(2+\eta)} 
\leq
\delta g(\xi)
\end{align*}
for all sufficiently large $M$.

\textbf{Case 2:} $|\xi| \geq M/8$. 

Using \eq{110-2}, write 
$$
\widehat{\chi F_{M}}(\xi) - \widehat{\chi}(\xi) 
= \sum_{\substack{|\ell| > M/4 \\ |\ell| \leq |\xi|/2}} \widehat{\chi}(\xi-\ell)  \widehat{F_M}(\ell) 
+ \sum_{\substack{|\ell| > M/4 \\ |\ell| > |\xi|/2}} \widehat{\chi}(\xi-\ell)  \widehat{F_M}(\ell) 
= S_1 + S_2.
$$

If $|\ell| \leq |\xi|/2$, then $|\xi - \ell| \geq |\xi|/2 \geq M/16$. 
Hence by \eq{F-2}, \eq{108}, and \eq{108-2} we have
\begin{align*}
|S_1|
&\lesssim
\sum_{\substack{|\ell| > M/4 \\ |\ell| \leq |\xi|/2}} (1+|\xi - \ell|)^{-K} 
= 
\sum_{\substack{|\ell| > M/4 \\ |\ell| \leq |\xi|/2}} (1+|\xi - \ell|)^{-a-\eta-(2+\eta)} \\
&\leq
(1 + |\xi|/2)^{-a} (1 + M/16)^{-\eta} \sum_{\substack{|\ell| > M/4 \\ |\ell| \leq |\xi|/2}} (1+|\xi - \ell|)^{-(2+\eta)} 
\leq
\frac{1}{2} \delta g(\xi)
\end{align*}
for all sufficiently large $M$. 

Fix $\ln 2 < \zeta < 1$. By Lemma \ref{lemma 2}, \eq{108}, and \eq{108-2} we have
\begin{align*}
|S_2|
&\lesssim
\sum_{\substack{|\ell| > M/4 \\ |\ell| > |\xi|/2}} |\ell|^{-a} \exp\rbr{ {\zeta \ln |\ell|} / {\ln \ln |\ell|} } (1+|\xi - \ell|)^{-K} \\
&\lesssim 
(|\xi|/2)^{-a} \exp\rbr{ {\zeta  \ln (|\xi|/2)} / {\ln \ln (|\xi|/2)} } 
\leq
\frac{1}{2} \delta g(\xi)
\end{align*}
for all sufficiently large $M$.
\end{proof}

\section{Proof of Theorem \ref{main-thm}: The Measure $\mu$}\label{part 3}

Given any closed ball $B \subseteq \mathbb{R}^2$, fix an arbitrary non-negative $C^{K}$ function $\chi_0$ on $\mathbb{R}^2$ with $\text{\supp}(\chi_0) \subseteq B$ and $\int_{\mathbb{R}^2} \chi_0 (x) dx = 1$. Using Lemma \ref{main-lemma}, define
$$
M_1 = M_{\ast}(2^{-2},1,\chi_0), \quad M_{k+1} = M_{\ast}(2^{-k-2},2M_{k},\chi_0 F_{M_1} \cdots F_{M_{k}} ) \quad \forall k \in \NN.
$$
Define measures $\mu_k$ by
$$
d\mu_0 = \chi_0 dx, \quad d\mu_k = \chi_0  F_{M_1} \cdots F_{M_{k}} dx \quad \forall k \in \NN.
$$
By Lemma \ref{main-lemma}, $M_{k} \leq M_{k+1}/2$ for all $k \in \NN$ and 
\begin{align}\label{7}
|\widehat{\mu_k}(\xi) - \widehat{\mu_{k-1}}(\xi)| \leq 2^{-k-1} g(\xi) \quad \forall \xi \in \mathbb{R}^{2}, k \in \NN. 
\end{align}
Since $g$ is bounded, \eq{7} implies $(\widehat{\mu_k})_{k = 0}^{\infty}$ is Cauchy, hence convergent, in the supremum norm. 
Therefore, since each $\widehat{\mu_k}$ is a continuous function, $\displaystyle{\lim_{k \rightarrow \infty}} \widehat{\mu_k}$ is a continuous function.
By \eq{7}, we have
\begin{align}\label{8}
|\lim_{k \rightarrow \infty} \widehat{\mu_k}(\xi) - \widehat{\mu_0}(\xi)| 
\leq 
\sum_{k=1}^{\infty} |\widehat{\mu_k}(\xi) - \widehat{\mu_{k-1}}(\xi)|
\leq \frac{1}{2}g(\xi) \quad \forall \xi \in \mathbb{R}^{2}
\end{align}
Since $\widehat{\mu_0}(0) = \int_{\mathbb{R}^{2}} \chi_0(x)dx = 1$ and $g(0) = 1$, it follows from \eq{8} that 
$$
1/2 \leq |\displaystyle{\lim_{k \rightarrow \infty}} \widehat{\mu_k}(0)| \leq 3/2.
$$ 
Therefore, by L\'{e}vy's continuity theorem, $(\mu_k)_{k=0}^{\infty}$ converges weakly to a non-zero finite Borel measure $\mu$ with $\widehat{\mu} = \displaystyle{\lim_{k \rightarrow \infty}} \widehat{\mu_k}$ and
\begin{align*}
\text{supp}(\mu) 
= \text{supp}(\chi_0) \cap \bigcap_{k=1}^{\infty} \text{supp}(F_{M_k}).
\end{align*}
By Lemma \ref{support lemma} and $\text{supp}(\chi_0) \subseteq B$, we have
$$\text{supp}(\mu) \subseteq B \cap E_{\ast}(\tau).$$
Since $\chi_0 \in C_{c}^{K}(\mathbb{R}^{2})$, we have $\widehat{\mu_0}(\xi) \lesssim (1+|\xi|)^{-a}$ for all $\xi \in \mathbb{R}^{2}$. Combining this with \eq{8} gives 
$$
|\widehat{\mu}(\xi)| \lesssim g(\xi) \quad \forall \xi \in \mathbb{R}^{2}.
$$
By multiplying $\mu$ by a constant, we can make $\mu$ a probability measure. This completes the proof of Theorem \ref{main-thm}.

\section{Outline of Proof of Theorem \ref{prime-thm}}\label{prime-outline}

The proof of Theorem \ref{prime-thm} is obtained by modifying the proof of Theorem \ref{main-thm} in a few places, as we now describe. 

Throughout the proof, we replace $\ZZ^2$ by the set of Gaussian primes $P$, and we replace 
$$\ZZ^2(M) = \cbr{q \in \ZZ^2 : M/2 < |q| \leq M}$$ 
by 
$$P(M) = \cbr{q \in P : M/2 < |q| \leq M}.$$ 

Lemma \ref{lemma 2} is replaced by 
\begin{lemma}\label{prime lemma 2}
$$
|\widehat{F_M}(\ell)| \lesssim |\ell|^{-a} \ln |\ell| \quad \forall \ell \in \ZZ^2, |\ell| \geq 2, M \geq 4.
$$
\end{lemma}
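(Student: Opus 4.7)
The plan is to follow the template of the proof of Lemma \ref{lemma 2} and replace its two $\ZZ^2$-specific ingredients (the trivial lower bound $|\ZZ^2(M)| \gtrsim M^2$ and the divisor bound of Lemma \ref{divisor Gaussian}) with their prime-theoretic analogues. First I would write down the analogue of \eq{rewrite} with $\ZZ^2(M)$ replaced by $P(M)$,
$$
\widehat{F_M}(\ell) = \frac{1}{|P(M)|}\sum_{q \in P(M) \cap D(\overline{\ell})} \widehat{\phi}(\epsilon(M)\ell/\overline{q}),
$$
and run the same chain of estimates as in Lemma \ref{lemma 2} (applying \eq{phi decay}, the bound $|\ell/\overline{q}| \geq |\ell|/(2M)$, the identity $a(1+\tau)=2$, and the elementary inequality $(1+y)^{-K} \leq y^{-a}$ valid for $y>0$ since $K \geq a > 0$) to reach
$$
|\widehat{F_M}(\ell)| \lesssim \frac{|P(M) \cap D(\overline{\ell})|}{|P(M)|}\, M^2 |\ell|^{-a}.
$$

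For the denominator, I would invoke Landau's prime ideal theorem for the Gaussian integers: the number of Gaussian prime ideals of norm at most $N$ is asymptotic to $N/\ln N$. Since $\ZZ+i\ZZ$ is a PID with $4$ units and $|q| \leq |q|_2 \leq \sqrt{2}|q|$, comparing the counts for $|q| \leq M$ and $|q| \leq M/2$ (the contribution of the inner region is strictly smaller to leading order) gives $|P(M)| \gtrsim M^2/\ln M$ for all $M$ sufficiently large; for bounded $M \geq 4$ the bound holds trivially after absorbing into the implicit constant.

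For the numerator, the key observation is that if $q_1,\ldots,q_m$ are pairwise non-associate Gaussian primes in $P(M) \cap D(\overline{\ell})$, then $q_1\cdots q_m$ divides $\overline{\ell}$ up to a unit, and by multiplicativity of $|\cdot|_2$ on $\CC$,
$$
(M/2)^m \leq |q_1|_2\cdots|q_m|_2 = |q_1\cdots q_m|_2 \leq |\overline{\ell}|_2 \leq \sqrt{2}\,|\ell|,
$$
whence $m \lesssim \ln|\ell|/\ln M$ for $|\ell|\geq 2$ and $M\geq 4$. Since multiplication by any of the four units of $\ZZ+i\ZZ$ preserves both $|\cdot|$ and the divisibility relation, each associate class contributes exactly $4$ elements, so $|P(M)\cap D(\overline{\ell})| = 4m \lesssim \ln|\ell|/\ln M$.

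Combining the three estimates produces
$$
|\widehat{F_M}(\ell)| \lesssim \frac{\ln|\ell|/\ln M}{M^2/\ln M}\, M^2 |\ell|^{-a} = |\ell|^{-a}\ln|\ell|,
$$
as desired. The main obstacle is the lower bound $|P(M)| \gtrsim M^2/\ln M$, which requires the prime ideal theorem in a form uniform for $M \geq 4$; the divisor count itself is essentially elementary once associates are tracked correctly, and this is exactly why replacing $\ZZ^2$ by the sparser set $P$ sharpens the factor $\exp(\zeta\ln|\ell|/\ln\ln|\ell|)$ of Lemma \ref{lemma 2} to a single $\ln|\ell|$.
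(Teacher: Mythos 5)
Your proposal is correct and follows essentially the same route as the paper: rerun the chain of inequalities from the proof of Lemma~\ref{lemma 2} with $\ZZ^2(M)$ replaced by $P(M)$, then invoke $|P(M)| \gtrsim M^2/\ln M$ (Landau's prime ideal theorem for $\ZZ[i]$) and $|P(M)\cap D(\overline{\ell})| \lesssim \ln|\ell|/\ln M$ (unique factorization, since the product of pairwise non-associate prime divisors of $\overline{\ell}$ lying in $P(M)$ still divides $\overline{\ell}$). The paper only sketches these two counting estimates and explicitly defers the $|\ell|\ge 2$, $M\ge 4$ technicalities; your write-up supplies the missing details consistently with that sketch.
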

The proof of Lemma \ref{prime lemma 2} is a modification of the proof of Lemma \ref{lemma 2}. Instead of estimating $|\ZZ^2(M)|$ and $|\ZZ^2(M) \cap D(\overline{\ell})|$, we estimate $|P(M)|$ and $|P(M) \cap D(\overline{\ell})|$. By the prime number theorem in the Gaussian integers 
(which is a consequence of Landau's prime ideal theorem), 
we have
$$
|P(M)| \gtrsim \frac{M^2}{\ln M}.
$$
By unique factorization in the Gaussian integers, we have 
$$
|P(M) \cap D(\overline{\ell})| \lesssim \frac{\ln |\ell|}{\ln M}.
$$
We assume $|\ell| \geq 2$ and $M \geq 4$ to avoid technicalities.

Finally, the function $g$ appearing in Lemma \ref{main-lemma} is changed to 
$$
g(\xi) = 
\left\{
\begin{array}{cl}
|\xi|^{-a} \ln|\xi| \ln \ln |\xi| & \text{if } \xi \in \mathbb{R}^2, |\xi| > e \\
1 & \text{if } \xi \in \mathbb{R}^2, |\xi| \leq e.
\end{array} \right.
$$
The estimate for $S_2$ in the proof of Lemma \ref{main-lemma} now goes like this:
By Lemma \ref{prime lemma 2}, \eq{108}, and  \eq{108-2} we have
\begin{align*}
|S_2|
\lesssim
\sum_{\substack{|\ell| > M/4 \\ |\ell| > |\xi|/2}} |\ell|^{-a} \ln|\ell| (1+|\xi - \ell|)^{-K} 
\lesssim 
(|\xi|/2)^{-a} \ln (|\xi|/2) 
\leq
\frac{1}{2} \delta g(\xi)
\end{align*}
for all sufficiently large $M$.


\vspace{1 cm}

\noindent Kyle Hambrook \\
Department of Mathematics, University of Rochester, Rochester, NY, 14627 USA \\
\texttt{khambroo@ur.rochester.edu}


\begin{thebibliography}{99}


\bibitem{BV} V. Beresnevich, S. Velani, \textit{A mass transference principle and the Duffin-Schaeffer conjecture for Hausdorff measures}, Ann. of Math. (2) \textbf{164} (2006), no. 3, 971--992.

\bibitem{BV-2} V. Beresnevich, S. Velani, \textit{Schmidt's theorem, Hausdorff measures, and slicing}, Int. Math. Res. Not. 2006, Art. ID 48794, 24 pp.

\bibitem{Bes} A. S. Besicovitch, \textit{Sets of fractional dimension (IV): On rational approximation to real numbers}, J. London Math. Soc., \textbf{9} (1934), 126--131.

\bibitem{Bluhm-1} C. Bluhm, \textit{Random recursive construction of Salem sets},
Ark. Mat. {\bf 34} (1996), 51--63.

\bibitem{Bluhm-2} C. Bluhm, \textit{On a theorem of Kaufman: Cantor-type
construction of linear fractal Salem sets},
Ark. Mat. {\bf 36} (1998), 307--316.


\bibitem{BD} J. D. Bovey, M. M. Dodson, \textit{The Hausdorff dimension of systems of linear forms}. Acta Arith. \textbf{45} (1986), no. 4, 337-–358.


\bibitem{chen} X. Chen, {\it Sets of Salem type and sharpness of the $L^2$-Fourier restriction theorem}, Trans. Amer. Math. Soc., \url{http://www.ams.org/journals/tran/0000-000-00/S0002-9947-2015-06396-X/}

\bibitem{chen-seeger2015} X. Chen, A. Seeger, \textit{Convolution powers of Salem measures with applications}, preprint, 
\url{http://xxx.lanl.gov/abs/1509.00460}






\bibitem{Eggleston} H. G. Eggleston, \textit{Sets of fractional dimensions which occur in some problems of number theory}, Proc. London Math. Soc. \textbf{54} (1951), 42--93.

\bibitem{ek-1} F. Ekstr\"{o}m, T. Persson, J. Schmeling, \textit{On the Fourier dimension and a modification}, Journal of Fractal Geometry, \textbf{2} (2015), no. 3, 309–-337.

\bibitem{ek-2} F. Ekstr\"{o}m, \textit{Fourier dimension of random images}, preprint. \texttt{http://arxiv.org/abs/1506.00961}

\bibitem{Falconer} K. J. Falconer, \textit{The geometry of fractal sets}, vol. 85, Cambridge University Press, 1986.





\bibitem{FOS} J. M. Fraser, T. Orponen, and T. Sahlsten, \textit{On Fourier analytic properties of graphs}, Int. Math. Res. Not., \textbf{10} (2014), 2730--2745.

\bibitem{Frostman} O. Frostman, \textit{Potentiel d’\'{e}quilibre et capacit\'{e} des ensembles avec quelques applications \`{a} la th\'{e}orie des fonctions}, Meddel. Lunds Univ. Math. Sem. \textbf{3} (1935), 1-–118.

\bibitem{Gatesoupe} M. Gatesoupe, \textit{Sur un th\'{e}or\`{e}me de R. Salem}, Bull. Sci. Math. (2) \textbf{91} (1967), 125-–127.




\bibitem{hambrook-explicit} K. Hambrook, \textit{Explicit Salem sets and applications to metrical Diophantine approximation}, preprint. \texttt{https://arxiv.org/abs/1604.00411v1}





\bibitem{HW} G. H. Hardy, E. M. Wright, \textit{An Introduction to the Theory of Numbers}, 6th ed., Oxford University Press (2008).





\bibitem{Jarnik} V. Jarn{\'\i}k, \textit{Diophantischen Approximationen und Hausdorffsches Mass}, Mat. Sbornik, \textbf{36} (1929), 371--382.

\bibitem{Jarnik-2} V. Jarn{\'\i}k, \textit{\"{U}ber die simultanen diophantischen}, Math. Z. \textbf{33} (1931), 505--543.

\bibitem{Kahane-1966-Brownian} J.-P. Kahane, \textit{Images browniennes des ensembles parfaits}, C. R. Acad. Sci. Paris Sér. A-B \textbf{263} (1966) A613--A615.

\bibitem{Kahane-1966-Fourier} J.-P. Kahane, \textit{Images d'ensembles parfaits par des s\'{e}ries de Fourier gaussiennes}, C. R. Acad. Sci. Paris Sér. A-B \textbf{263} (1966) A678--A681.

\bibitem{Kahane} J.-P. Kahane, \textit{Some Random Series of Functions}, 2nd ed., Cambridge Univ. Press, 1985.


\bibitem{Kaufman} R. Kaufman, \textit{On the theorem of Jarn{\'\i}k and Besicovitch}, Acta Arith. \textbf{39} (1981), 265--267.

\bibitem{Korner} T. K\"{o}rner, \textit{Hausdorff and Fourier dimension}, Studia Math. \textbf{158} (2011), 37--50.


\bibitem{LP} I. {\L}aba, M. Pramanik, \textit{Arithmetic progressions in sets of fractional dimension}, Geom. Funct. Anal. \textbf{19} (2009), no. 2, 429--456.




\bibitem{mattila-book} P. Mattila, \textit{Geometry of sets and measures in Euclidean spaces}, Cambridge Studies in Advanced Mathematics, vol. 44, Cambridge University Press, 1995.

\bibitem{mattila-book-2} P. Mattila, \textit{Fourier Analysis and Hausdorff Dimension}, Cambridge University Press, 2015.












\bibitem{Salem} R. Salem, \textit{On singular monotonic functions whose spectrum has a given Hausdorff dimension}, Ark. Mat. \textbf{1} (1951), 353--365.


\bibitem{shmerkin-suomala} P. Shmerkin, V. Suomala, \textit{Spatially independent martingales, intersections, and applications},
preprint, 2014, \url{http://lanl.arxiv.org/abs/1409.6707}



\bibitem{Wigert} S. Wigert, \textit{Sur l'ordre de grandeur du nombre des diviseurs d'un entier}, Ark. Mat. \textbf{3} (1906/7), 1–-9.







\bibitem{Wolff} T. Wolff, \textit{Lectures on Harmonic Analysis}, Amer. Math. Soc., Providence, R.I., 2003.
\end{thebibliography}
\end{document}